 \newtheorem{definition}{Definition}
 \newtheorem{thm}[definition]{Theorem}
 \newtheorem{corollary}[definition]{Corollary}
\newtheorem{lemma}[definition]{Lemma} 
 \newtheorem{conjecture}[definition]{Conjecture}
 \theoremstyle{remark}
 \theoremstyle{remark}
 \newcommand{\emtext}[1]{\text{\em #1}}
\newcommand{\minae}{\textrm{\ae}}
\newcommand{\minAE}{\textrm{\AE}}
 \title{The Loebl--Koml\'os--S\'os conjecture for trees of diameter $5$ and for certain caterpillars
 }
\author{Diana Piguet\thanks{Supported by ITI grant no.~1M00216220808.} \and Maya Jakobine Stein\thanks{Supported by  FAPESP grant no.~05/54051-9.}}
\date{}
\begin{document}
 \maketitle
\begin{abstract}
Loebl, Koml\'os, and S\'os conjectured that if at least half the
vertices of a graph $G$ have degree at least some $k\in \mathbb N$, then
every tree with at most~$k$ edges is a subgraph of $G$.

We prove the conjecture for all trees of diameter at most~$5$ and for a class of caterpillars.
Our result implies a bound on the Ramsey number $r(T,T')$ of trees~$T,T'$ from the above classes.
\end{abstract}

\section{Introduction}
Loebl conjectured (see~\cite{discrepency}) that if $G$ is a graph of order $n$, and at least $n/2$ vertices of $G$ have degree at least $n/2$, then every tree with at most $n/2$ edges is a subgraph of $G$.
Koml\'os and S\'os generalised his conjecture to the following.

\begin{conjecture}[Loebl--Koml\'os--S\'os conjecture~\cite{discrepency}]\label{conj:LKS}
Let $k,n\in\mathbb N$, and let  $G$ be a graph of order $n$ so that  at least $n/2$ vertices of $G$ have degree at least  $k$. Then every tree with at most $k$ edges is a subgraph of $G$.
\end{conjecture}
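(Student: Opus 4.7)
The plan is to attempt a general embedding argument for an arbitrary tree $T$ with $e(T)\le k$ into any host graph $G$ satisfying the degree condition. Write $L:=\{v\in V(G):\deg_G(v)\ge k\}$, so $|L|\ge n/2$; the vertices of $L$ will serve as the ``reservoir'' into which all branch vertices of $T$ are mapped. The guiding idea is that every branch vertex $u\in V(T)$ has $\deg_T(u)\le k$, so an image $\phi(u)\in L$ automatically has enough neighbours in $G$ to accommodate the children of $u$, provided one can guarantee that sufficiently many of those neighbours are still unused.

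The scheme I would try is a rooted BFS embedding. First, root $T$ at a centroid $r$, so that every subtree hanging off $r$ has at most $\lceil(k{+}1)/2\rceil$ edges, and partition $V(T)$ into its BFS levels $V_0=\{r\},V_1,V_2,\ldots$. Choose $\phi(r)$ to be a vertex of $L$ of maximum degree, and embed $V_1$ into $N_G(\phi(r))$ using the slack $\deg(\phi(r))\ge k\ge|V_1|$. For each successive level $V_j$, every parent $u\in V_{j-1}$ already has its image $\phi(u)$; if $\phi(u)\in L$, then $\phi(u)$ has $\ge k$ neighbours in $G$ and at most $e(T)-1<k$ of them are used so far, so an unused neighbour exists. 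A defect-Hall argument should then extend $\phi$ simultaneously to all of $V_j$, with the proviso that branch vertices of $V_j$ are routed into $L$ and leaves of $T$ may go anywhere.

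The central obstacle is precisely the step ``route branch vertices into $L$''. A neighbour of $\phi(u)$ that lies in $L$ may simply fail to exist: one can imagine $\phi(u)\in L$ with all of its neighbours in the low-degree set $V(G)\setminus L$. To overcome this I would combine two ingredients. First, a double-counting argument using $|L|\ge n/2$ and the $k$-degree condition to show that the bipartite graph between $L$ and its outside neighbours carries a large matching, which lets one ``swap'' bad images for good ones through an augmenting-path style exchange. Second, a stability/dichotomy: either $G$ is far from the known extremal configurations (two almost-disjoint cliques of order roughly $k+1$, or a near-complete bipartite graph with one side of size close to $k/2$), in which case one has slack everywhere and the greedy embedding succeeds, or $G$ is close to one of these configurations, in which case one discards the BFS scheme and embeds $T$ directly by splitting it at the centroid into its two halves of size $\le\lceil(k+1)/2\rceil$, placing each half inside a different ``cluster'' of the near-extremal structure.

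The main difficulty I expect is calibrating this dichotomy quantitatively so that the non-extremal and near-extremal regimes truly cover all host graphs without a gap, and doing so uniformly in the shape of $T$; indeed, the full conjecture is resistant precisely because trees of widely different shapes (paths, stars, brooms, balanced binary trees) interact with these extremal configurations in incompatible ways, so a single argument must be robust to the entire family of $k$-edge trees at once. This is why, in the present paper, we restrict to trees of diameter at most $5$ and to a class of caterpillars, where the tree structure is rigid enough to make the relevant case analysis tractable.
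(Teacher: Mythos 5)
This statement is the Loebl--Koml\'os--S\'os conjecture itself, which the paper does not prove; it is open, and the paper establishes only the special cases of trees of diameter at most $5$ (Theorem~\ref{thm:LKS-5}) and certain caterpillars (Theorem~\ref{prop:2stars}). Your text is accordingly not a proof but a plan, and the plan has a genuine gap exactly where you locate it: the step ``route branch vertices into $L$''. Nothing in the hypotheses guarantees that an already-embedded vertex $\phi(u)\in L$ has \emph{any} unused neighbour in $L$, and this is not a technicality one can wave away with a matching or an augmenting-path exchange. Indeed, the paper's own argument for diameter~$5$ shows how delicate this is: after passing to an edge-minimal counterexample one proves that the set $B$ of vertices of $L$ with at least $k/2$ neighbours in $L$ is \emph{independent} (statement~\eqref{eq:B}), i.e.\ the $L$--$L$ edges that your BFS scheme wants to walk along are genuinely scarce, and the whole proof is a careful accounting ($N$, $X$, $\tilde N$, the sets $A,B,C,D$) of where the few available high-degree neighbourhoods sit. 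Your two proposed ingredients --- a large matching between $L$ and $V(G)\setminus L$ obtained by double counting, and a non-extremal/near-extremal dichotomy --- are named but never formulated as lemmas, let alone proved; no quantitative threshold is given for the dichotomy, no argument that the two regimes cover all hosts, and no embedding is actually constructed in either regime. Since even the asymptotic version of the conjecture (for $k\in\Theta(n)$, cf.~\cite{DM,zhao}) requires regularity-type machinery, the missing content here is essentially the entire proof.

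A secondary, fixable slip: the claim that ``at most $e(T)-1<k$'' neighbours of $\phi(u)$ are used so far only bounds collisions with previously embedded vertices of $T$; it does nothing to ensure that the surviving neighbours include enough vertices of $L$ to host the non-leaf children of $u$, which is again the same obstruction. If you want to salvage something from this direction, the honest conclusion is the one the paper draws: restrict to tree classes (small diameter, caterpillars) rigid enough that the failure modes of the greedy embedding can be enumerated and each one turned into a counting contradiction, as in the chain~\eqref{eq:leaves}--\eqref{eq:no-vertex}.
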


The conjecture is asymptotically correct, at least if $k\in\Theta(n)$:
In~\cite{DM}, the authors of this paper prove an approximate version of the Loebl--Koml\'os--S\'os conjecture for large $n$, and $k$ linear in $n$. In Loebl's original form, an approximate version has been shown by Ajtai, Koml\'os and Szemer\'edi~\cite{aks}, and recently, Zhao~\cite{zhao} has shown the exact version.

The bounds from the conjecture could not be significantly lower. It is easy to see that we need at least one vertex of degree at least $k$ in $G$.
On the other hand, the amount of vertices of large degree that is required in Conjecture~1 is necessary.
We shall discuss the bounds in more detail in Section~\ref{sec-bound}.

Conjecture~1 trivially holds
for stars. In order to see  the conjecture for trees that consist of two stars with adjacent centres, it is enough to realise that $G$ must have two adjacent vertices of degree at least $k$. Indeed, otherwise  one easily reaches a contradiction by double-counting the number of edges between the set $L\subseteq V(G)$ of vertices of degree at least $k$, and the set $S:=V(G)\setminus L$.

Hence, the Loebl--Koml\'os--S\'os conjecture is true for all trees of diameter at most~$3$.
Barr and Johansson~\cite{Barr}, and independently Sun~\cite{Sun}, proved the conjecture for all trees of diameter  $4$.
Our main result is a proof of Conjecture~1 for all trees of diameter at most~$5$.

\begin{thm}\label{thm:LKS-5}
Let $k,n\in\mathbb N$, and let  $G$ be a graph of order $n$ so that  at least $n/2$ vertices of $G$ have degree at least  $k$. Then every tree of diameter at most~$5$ and with at most~$k$ edges is a subgraph of $G$.
\end{thm}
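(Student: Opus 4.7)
Since diameter at most $4$ is already handled, I focus on trees $T$ with diameter exactly $5$. Such a $T$ has a unique central edge $ab$; set $A := N_T(a)\setminus\{b\}$, $B := N_T(b)\setminus\{a\}$, and for $v\in A\cup B$ write $c(v)$ for the number of leaf-children of $v$. Then $|A|+|B|+\sum_{v\in A\cup B}c(v) = e(T)-1\le k-1$. On the graph side, let $L$ be the set of vertices of $G$ of degree at least $k$ and $S:=V(G)\setminus L$, so $|L|\ge n/2$.

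\textbf{Embedding strategy.} The first step is to locate an edge $xy$ with $x,y\in L$, which is provided by exactly the double-counting argument that handles diameter-$3$ trees. The aim is then to embed $T$ by sending $a\mapsto x$ and $b\mapsto y$; this reduces to choosing disjoint injective maps $\varphi_A\colon A\to N_G(x)\setminus\{y\}$ and $\varphi_B\colon B\to N_G(y)\setminus\{x\}$, after which the leaf-children are matched to unused neighbours of each $\varphi(v)$. Since at most $k+1$ vertices are used in total, any image that lies in $L$ has automatic slack, so the only real constraint is that images in $S$ still have enough spare neighbours to host their leaves. Ordering $A$ and $B$ by decreasing $c$-value and running a greedy assignment that reserves vertices of $L\cap(N_G(x)\cup N_G(y))$ for the heavy subtrees disposes of the generic situation.

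\textbf{The main obstacle.} The genuine difficulty is the extremal regime in which $N_G(x)\cup N_G(y)$ contains too few high-degree vertices to carry the heaviest subtrees of $T$. Here I would exploit the freedom in the choice of $(x,y)$: averaging over edges of $G[L]$ (using $|L|\ge n/2$ and the degree lower bound on every vertex of $L$) should produce an adjacent pair whose joint degree profile into $L$ is good enough. If no such pair exists, $G$ is close to a concrete extremal configuration and a direct counting argument finishes the embedding. A complementary case split by the shape of $T$ may be useful: if $T$ is very unbalanced (essentially a diameter-$4$ tree with one extra pendant path), one can apply the Barr--Johansson/Sun result to $T-a$ and attach the remaining branch by hand.

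\textbf{Hardest step.} I expect the crux to be exactly this extremal analysis---isolating the precise structural obstruction when the greedy assignment fails, and ruling it out by an averaging/counting argument inside $L$. The proof will most likely be organized as a small collection of subcases indexed by the distribution of the $c$-values across $A$ and $B$ and by the density of $G[L]$, with each subcase closed either by a direct embedding or by a contradiction with the bound $|L|\ge n/2$.
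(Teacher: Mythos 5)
The proposal correctly identifies the right first move (embed the central edge of $T$ onto an $L$--$L$ edge of $G$ and handle the two-level neighbourhoods greedily) and correctly locates where the real work is. But the place you call the ``hardest step'' is essentially the entire theorem, and the plan you offer there does not close it. ``Averaging over edges of $G[L]$ to find an adjacent pair with a good joint degree profile into $L$'' is precisely what fails in the extremal case: the paper's proof shows that, in a minimal counterexample, the set $B$ of vertices of $L$ with many $L$-neighbours (at least $k/2$) is \emph{independent}, so every $L$--$L$ edge has at least one endpoint with fewer than $k/2$ neighbours in $L$. No amount of averaging over $G[L]$ produces a pair whose joint $L$-degree is large, because such a pair simply need not exist. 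This is not a generic nuisance that a greedy or averaging argument disposes of; it is the structural obstruction the whole proof is built around.

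A second, related gap: your framework insists on mapping both $a$ and $b$ into $L$. The paper's argument is forced, in several of its embedding steps, to place one of the two central vertices in a carefully chosen subset $C\subseteq S$ (the $S$-vertices of degree $\geq k/2$ all of whose neighbours lie in $L$), and to route one layer of the tree through $C$ before returning to $L$. Without allowing images of $a$ or $b$ (and of some second-level vertices) in $S$, you cannot reproduce those steps, and the counting that eventually yields $|L|<|S|$ (the final contradiction) has no analogue. What is missing, concretely, is the four-part partition $A,B,C,D$ of $V(G)$ by $L$-degree and total degree, the independence of $B$, the bound $\deg_B(v)<k/4$ for $v\in N(B)\cap L$, the fact $e(X,C)=0$, and the chain of inequalities these facts feed into. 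Your side remark about reducing a very unbalanced $T$ to the diameter-$4$ case by deleting $a$ does not work as stated either: $T-a$ is a forest, not a tree, and even if one re-glues it, the resulting embedding gives no control over the neighbourhood of $\varphi(b)$ needed to reattach the $a$-branch. In short, the sketch stops exactly where the proof begins.
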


Paths and path-like trees constitute another class of trees for which Conjecture~1 has been studied.
Bazgan, Li, and Wo\'zniak~\cite{blw}
proved the conjecture for paths and for all trees that can be obtained from a path and a  star by identifying one of the vertices of the path with the centre of the star.

We extend their result to a larger class of trees, allowing for two stars instead of one, under certain restrictions. Let $\mathcal T(k,\ell,c)$ be the class of all trees with $k$ edges which can be obtained from a path $P$ of length $k-\ell$, and two stars $S_1$ and~$S_2$ by identifying the centres of the $S_i$ with two vertices that lie at distance~$c$ from each other on $P$.

\begin{thm}\label{prop:2stars}
Let $k,\ell,c,n\in\mathbb N$ such that $\ell\geq c$. Let $T\in\mathcal T(k,\ell,c)$, and let $G$ be a graph of order $n$ so that at least $n/2$ vertices of $G$ have degree at least $k$. If~$c$ is even, or $\ell+c\geq \lfloor n/2\rfloor $
 (or both), then
$T$ is a subgraph of~$G$.
\end{thm}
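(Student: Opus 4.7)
Write $T$ as a path $P = x_0 x_1 \cdots x_p$ of length $p := k - \ell$, with star centres $u_1 = x_a$ and $u_2 = x_{a+c}$ carrying $\ell_1$ and $\ell_2$ additional leaves, where $\ell_1 + \ell_2 = \ell$. After possibly reversing $P$ I may assume $a \le p - a - c$, so the ``short arm'' $x_0 \cdots x_a$ has length at most $(p-c)/2$. Let $L := \{v \in V(G) : \deg_G(v) \ge k\}$, so $|L| \ge n/2$. My plan is to embed $T$ in three stages: (i) locate two vertices $v_1, v_2 \in L$ joined by a path $R$ of length exactly $c$ in $G$; (ii) extend $R$ to the two arms of $P$ inside $G - V(R)$; (iii) attach the $\ell_i$ leaves of star $S_i$ at unused neighbours of $v_i$.

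\textbf{Stage (i): anchoring.} For $c$ even, I would pick an arbitrary $v_1 \in L$ and exploit the BFS structure around $v_1$: each $L$-vertex has at least $k$ neighbours and $|L| \ge n/2$, so a local counting argument yields another $L$-vertex $v_2$ reachable from $v_1$ by a walk of even length at most $c$, and a detour through the large neighbourhood of an intermediate vertex stretches this walk to length exactly $c$, which can then be pruned to a path. For $c$ odd, the analogous parity-dependent step can be obstructed by near-bipartite $G$; the role of the hypothesis $\ell + c \ge \lfloor n/2 \rfloor$ is precisely to eliminate this difficulty, because it forces $p \le k - \lfloor n/2 \rfloor + c$, so the arms of $P$ are collectively short, the stars dominate $T$, and one has enough unused vertices outside $R$ to realise $R$ as an arbitrary length-$c$ walk between two prescribed vertices of $L$.

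\textbf{Stages (ii) and (iii): arms and leaves.} Once $u_1 \mapsto v_1$ and $u_2 \mapsto v_2$ are fixed on $R$, I would extend the short arm of length $a$ greedily from $v_1$ into $G - V(R)$; this is never blocked at the first step because $v_1$ has $\ge k$ neighbours of which few are taken, and the subsequent steps can be arranged to prefer $L$-vertices at interior positions to avoid being stranded at a low-degree dead end. The long arm of length $p - a - c$ is extended from $v_2$ in the same spirit, and whenever the greedy construction stalls at some $w$ whose remaining neighbourhood is exhausted, a local rerouting through an unused neighbour of $v_2$ (exploiting $\deg_G(v_2) \ge k$) restores progress. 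Finally, once $P$ is fully embedded at most $k - \ell_i$ vertices of $G$ have been used, so at least $\ell_i$ neighbours of $v_i$ remain free to host the leaves of $S_i$.

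\textbf{Main obstacle.} I expect the main difficulty to sit in Stage (i) for odd $c$: parity obstructions in near-bipartite or degree-critical extremal configurations make a length-$c$ path between two prescribed high-degree vertices genuinely hard to guarantee, and the disjunction in the hypothesis is sculpted precisely to dodge them. The even-$c$ branch of Stage (i) reduces to a BFS and double-counting argument, while Stages (ii) and (iii) amount to greedy extension with reservoir bookkeeping once the anchors $v_1, v_2$ are in place.
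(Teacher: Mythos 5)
Your approach is genuinely different from the paper's, and unfortunately several of its steps are gaps rather than proofs. The paper does not try to build the embedding from the inside out. Instead it first invokes the Bazgan--Li--Wo\'zniak result (the case $b=0$ of a caterpillar) to obtain a \emph{path of length $k$} in $G$, then embeds the body $P$ of $T$ as a subpath of that long path and \emph{shifts} the image along the long path (there is room for up to $\ell$ shifts since $|P_k|-|P|=\ell\ge c$) until both joints land in $L$. For even $c$ the shift argument is elementary, using only the independence of $S$; for odd $c$ the paper derives from an iterated shifting argument that every sufficiently long path must zigzag between $L$ and $S$ away from its ends, then uses the degree bound coming from $\ell+c\ge\lfloor n/2\rfloor$ to exhibit a long path with a chord that violates the zigzag property.

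By contrast, your Stage (i) is not a proof. For the even-$c$ branch you assert that a BFS plus local counting yields an $L$-vertex at even distance at most $c$, and that an intermediate ``detour'' can stretch a short walk to a walk of length exactly $c$ which can then be ``pruned to a path''. Pruning a walk to a path can only shorten it and can change its parity; there is no reason the result has length exactly $c$. For the odd-$c$ branch the proposed mechanism is vaguer still: you say the hypothesis $\ell+c\ge\lfloor n/2\rfloor$ ``forces the stars to dominate'' and then ``one has enough unused vertices outside $R$ to realise $R$ as an arbitrary length-$c$ walk between two prescribed vertices of $L$''. This does not engage with the actual obstruction (for instance, if $L$ and $S$ form a bipartition, no odd-length path connects two vertices of $L$, and merely having many spare vertices does not create one); it is precisely here that the paper needs its delicate zigzag-and-chord argument, and your sketch offers no substitute for it. Your Stage (ii) greedy-with-rerouting extension of the two arms is likewise asserted rather than proved: a greedy path of length up to $p-a-c$ can get stranded, and ``local rerouting through a neighbour of $v_2$'' does not in general rescue a dead end at a vertex far from $v_2$. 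Stage (iii) is fine as a counting observation once the rest is in place, but that is the least of the difficulties. To make your plan work you would have to supply the content of Stages (i) and (ii) essentially from scratch; as written, the proposal identifies where the difficulty lies but does not resolve it.
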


If true, Conjecture~1 has an interesting application in Ramsey theory, as has been first observed in~\cite{discrepency}.
The Ramsey number $r(T_{k+1},T_{m+1})$ of two trees~$T_{k+1},T_{m+1}$  with $k$, resp.~$m$ edges is defined as the minimal integer $n$ so that any colouring of the edges of the complete graph $K^n$ of order $n$ with two colours, say red and blue, yields either a red copy of $T_{k+1}$, or a blue copy of~$T_{m+1}$ (or both).

Observe that in any such colouring, either the red subgraph of $K^n$ has at least~$n/2$ vertices of degree at least~$k$, or the blue subgraph has at least $n/2$ vertices of degree at least $n-k$. Hence,
 if Conjecture~1 holds for all $k$ and $n$, then $r(T_{k+1},T_{m+1})\leq k+m$ for all $k,m \in\mathbb N$.

The bound $k+m$  is asymptotically true: the authors of this article prove in~\cite{DM} that $r(T_{k+1},T_{m+1})\leq k+m+o(k+m)$, provided that~$k,m\in \Theta(n)$. Although particular classes of trees (such as paths~\cite{gegy}) have smaller Ramsey numbers, the bound $k+m$ would be tight in the class of all trees. In fact, the Ramsey number of two stars
  with $k$, resp.~$m$ edges, is~$k+m-1$, if both $k$ and~$m$ are even, and $k+m$ otherwise~\cite{harary}.

Our results on Conjecture~1 allow us to bound the %determine the
Ramsey numbers of further classes of trees. Theorem~\ref{thm:LKS-5} and Theorem~\ref{prop:2stars} have the following corollary.

\begin{corollary}\label{cor:LKS-5}
Let $T_1, T_2$ be trees with $k$ resp.~$m$ edges such that, for $i=1,2$, either $T_i$ is as in Theorem~\ref{prop:2stars} or has diameter at most~$5$ (or both).
Then  $r(T_1,T_2)\leq k+m$.
\end{corollary}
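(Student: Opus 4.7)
The plan is to carry out the standard reduction from the Loebl--Koml\'os--S\'os conjecture to a Ramsey bound on trees, as already sketched in the paragraph preceding the corollary. Set $n := k+m$ and consider an arbitrary $2$-edge-colouring of $K^n$ in red and blue; write $G_R$ and $G_B$ for the corresponding spanning subgraphs. Since $d_{G_R}(v)+d_{G_B}(v)=n-1=k+m-1$ for every vertex $v$, one obtains a clean dichotomy: either at least $n/2$ vertices of $G_R$ have red-degree at least $k$, or strictly more than $n/2$ vertices have red-degree at most $k-1$, in which case each such vertex has blue-degree at least $n-1-(k-1)=n-k=m$, so at least $n/2$ vertices of $G_B$ have blue-degree at least $m$.

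In the first case I would apply Theorem~\ref{thm:LKS-5} (if $T_1$ has diameter at most $5$) or Theorem~\ref{prop:2stars} (if $T_1$ lies in a suitable class $\mathcal T(k,\ell,c)$) to the graph $G_R$ of order $n$ with parameter $k$, producing a red copy of $T_1$ in $K^n$. In the second case I would apply the analogous theorem to $T_2$ inside $G_B$ of order $n$ with parameter $m$, yielding a blue copy of $T_2$. In either situation the colouring contains a monochromatic copy of one of the two trees, so $r(T_1,T_2)\leq n=k+m$.

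The step that deserves the most care is verifying the auxiliary hypothesis of Theorem~\ref{prop:2stars} in the Ramsey context: whenever $T_i$ lies in some class $\mathcal T(k_i,\ell_i,c_i)$, Theorem~\ref{prop:2stars} requires that either $c_i$ is even, or $\ell_i+c_i\geq \lfloor n/2\rfloor$ where $n$ is the order of the ambient graph, which here is $k+m$. Since the corollary's hypothesis reads ``$T_i$ is as in Theorem~\ref{prop:2stars}'', this condition is understood to hold for $n=k+m$, so there is nothing further to check. Beyond this minor book-keeping I do not foresee any obstacle; the corollary is essentially a one-line deduction from Theorems~\ref{thm:LKS-5} and~\ref{prop:2stars} together with the degree-counting observation above.
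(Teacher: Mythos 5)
Your proof is correct and is exactly the argument the paper intends: the degree dichotomy (each vertex of $K^{k+m}$ has red-degree $\geq k$ or blue-degree $\geq m$, so one colour class has at least $(k+m)/2$ vertices of large degree) is already stated in the introduction, and the corollary then follows by applying Theorem~\ref{thm:LKS-5} or Theorem~\ref{prop:2stars} to the appropriate monochromatic subgraph. Your remark that the hypothesis of Theorem~\ref{prop:2stars} must be read with $n=k+m$ is the right book-keeping observation and matches the paper's intent.
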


\section{Notation}

Throughout the paper, $\mathbb{N}=\mathbb{N}_+$.

Our graph-theoretic notation follows~\cite{diestelBook05}, let us here review the main definitions needed.
A graph~$G$ has vertex set $V(G)$ and edge set $E(G)$.
As we will not distinguish between isomorphic graphs we consider a graph $H$ to be a  subgraph of $G$,  if there exists an injective mapping from $V(H)$ to $V(G)$ which preserves adjacencies. We shall then write $H\subseteq G$, and call any mapping as above an {\em embedding} of $V(H)$ in $V(G)$.

The {\em neighbourhood} of a vertex $v$ is $N(v)$, and the neighbourhood of a set $X\subseteq V(G)$ is $N(X):=\bigcup_{v\in X}N(v) \setminus X$. We set $\deg_X(v):=|N(v)\cap X|$ and %write
$\deg(v):=\deg_{V(G)}(v)$.

The {\em length} of a path is the number of its edges. For a path $P$ and two vertices $x,y\in V(P)$, let $xPy$ denote the subpath of $P$ which starts in $x$ and ends in~$y$. We define $ xP$ and $Py$ analogously.
The {\em distance} between two vertices is the length of the shortest path connecting them. The {\em diameter} of~$G$ is the longest distance between any two vertices of $G$.

\section{Discussion of the bounds}\label{sec-bound}

Let us now discuss the
 bounds in  Conjecture~1. On one hand, as $T$ could be a star, it is clear that we need that $G$ has a vertex of degree at least $k$.

On the other hand, we also need a certain amount of vertices of large degree. In fact,
 the amount $n/2$ we require cannot be lowered by a factor of $(k-1)/(k+1)$.
We shall show now that if we require only  $\frac{k-1}{k+1} n/2 = n/2-n/(k+1)$ vertices to have degree at least~$k$, the conjecture becomes false whenever $k+1$ is even and divides $n$.

To see this, construct a graph $G$ on $n$ vertices as follows. Divide $V(G)$ into $2n/(k+1)$ sets $A_i$, $B_i$, so that $|A_i|=(k-1)/2$, and $|B_i|=(k+3)/2$, for $i=1,\ldots ,n/(k+1)$. Insert all edges inside each $A_i$, and insert all edges between each pair $A_i$, $B_i$. Now, consider the tree $T$ we obtain from a star with $(k+1)/2$ edges by subdividing each edge but one. Clearly, $T$ is not a subgraph of $G$.

A similar construction shows that we need more than $\frac n2-\frac {2n}{k+1}$ vertices of large degree, when $k+1$ is odd and divides $n$, and furthermore,  by adding some isolated vertices, our example can  be  modified for arbitrary $k$. This shows that at least $n/2-2\lfloor n/(k+1)\rfloor - (n\mod (k+1))$ vertices of large degree are needed. Hence, when $\max\{n/k, n\mod k\}\in o(n)$,
 the bound $n/2$ is asymptotically best possible.

\section{Trees of small diameter}

In this section, we prove Theorem~\ref{thm:LKS-5}. We shall prove the theorem by contradiction.
So, assume that there are  $k,n\in\mathbb N$, and
a graph $G$ with $|V(G)|=n$, such that at least $n/2$ vertices of $G$ have degree at least $k$. Furthermore, suppose that $T$ is a tree  of diameter at most $5$ with $|E(T)|\leq k$ such that $T\not\subseteq G$.

We may assume that among all such counterexamples~$G$ for $T$, we have chosen~$G$ edge-minimal. In other words, we assume that the deletion of any edge of $G$ results in a graph which has less than $n/2$ vertices of degree $k$.

Denote by $L$ the set of those vertices of $G$ that have degree at least $k$, and set $S:=V(G)\setminus L$. Observe that, by our edge-minimal choice of $G$, we know that~$S$ is independent. Also, we may assume that $S$ is not empty.%Otherwise we may embed the any tree of order $k+1$ in a greedy way.

Clearly, our assumption that  $T\not\subseteq G$ implies that for each set $M$ of leaves of $T$ it  holds that
\begin{equation}\label{eq:leaves}
 \text{there is no embedding }\varphi\text{ of }V(T)\setminus M\text{ in }V(G) \text{ so that }\varphi (N(M))\subseteq L.
\end{equation}

In what follows, we shall often use the fact that both the degree of a vertex and the cardinality of a set of vertices are natural numbers. In particular, assume that $U,X\subseteq V(G)$, $u\in V(G)$, and  $x\in \mathbb{Q}$. Then the following implication holds.
\begin{equation}\label{eq:x}
\emtext{ If }|U|<x+1\emtext{  and }\deg_X(u)\geq x,\emtext{ then }|U|\leq \deg_X(u).
\end{equation}

Let us now define a useful partition of $V(G)$. Set
\begin{align*}
A&:=\{v\in L: \deg_{L}(v)< \frac k2\},\\
B&:=L\setminus A,\\
C&:=\{v\in S: \deg(v)=\deg_{L}(v)\geq \frac k2\},\emtext{ and }\\
D&:= S\setminus C.
\end{align*}

Let $r_1r_2\in E(T)$ be such that each vertex of $T$ has distance at most $2$ to at least one of $r_1$, $r_2$.
 Set
\begin{align*}
V_1&:=N(r_1)\setminus \{r_2\},
& V_2:=N(r_2)\setminus \{r_1\},\\
W_1&:=N(V_1)\setminus \{r_1\},
& W_2:=N(V_2)\setminus \{r_2\}.
\end{align*}
Furthermore, set
\begin{equation*}
V_1':=N(W_1)\ \ \text{ and }\ \
V_2':=N(W_2).
\end{equation*}

Observe that $|V_1\cup V_2\cup W_1\cup W_2|< k$.
So, without loss of generality (since we can otherwise interchange the roles of $r_1$ and $r_2$), we may assume that
\begin{equation}\label{eq:RQ}
|V_2\cup W_1|<\frac k2.
\end{equation}
Since $|V_1'|\leq |W_1|$, this implies that
\begin{equation}\label{eq:P'Q}
|V_1'\cup V_2|<\frac k2.
\end{equation}

Now, assume that there is an edge $uv\in E(G)$ with $u,v\in B$. We shall conduct this assumption to a contradiction to~\eqref{eq:leaves} by proving that then we can define an embedding $\varphi$ so that $\varphi(V_1'\cup V_2\cup\{r_1,r_2\})\subseteq L$.

Define the embedding $\varphi$ as follows.
Set $\varphi(r_1):=u$, and set $\varphi(r_2):=v$. Map~$V_1'$ to a subset of~$N(u)\cap L$, and~$V_2$ to a subset of~$N(v)\cap L$. This is possible, as~\eqref{eq:x} and~\eqref{eq:P'Q} imply that $|V_1'\cup V_2|+1\leq \deg_L(v)$.

We have thus reached the desired contradiction to~\eqref{eq:leaves}. This proves that
\begin{equation}\label{eq:B}
 B\emtext{ is independent.}
\end{equation}

%we do need the following observation:
Set $$N:=N(B)\cap L\subseteq A.$$

We claim that each vertex $v\in N$ has degree

\begin{equation}\label{eq:deg_B>k/4}
\deg_{B}(v)< \frac k4.
\end{equation}

Then,~\eqref{eq:B} and~\eqref{eq:deg_B>k/4} together imply that

$$|B|\frac k2\leq e(N,B)\leq |N|\frac k4,$$

and hence,

\begin{equation}\label{eq:N}
|N|\geq 2|B|.
\end{equation}

In order to see~\eqref{eq:deg_B>k/4}, suppose otherwise, i.\,e., suppose that there is a vertex $v\in N$ with $\deg_{B}(v)\geq \frac k4$.
Observe that by~\eqref{eq:P'Q},  $|V_1'\cup V_2'|<\frac k2$ and hence we may assume that at least one of $|V_1'|$, $|V_2'|$, say $|V_1|$, is smaller than $\frac k4$. The case when $|V_2'|<\frac k4$ is done analogously.

We define an embedding $\varphi$ of $V_1'\cup V_2'\cup\{r_1,r_2\}$ in~$V(G)$ as follows. Set $\varphi(r_1):=v$ and map $V_1'\cup \{r_2\}$ to $N(v)\cap B$. This is possible, because
$|V_1'|+1<\frac k4+1$, and thus, by~\eqref{eq:x}, $|V_1'\cup \{r_2\}|\leq \deg_B(v)$.

Next, map $V_2'$ to $N(u)\cap L$, where $u:=\varphi (r_2)$. This is safe, as~\eqref{eq:B} implies that $$N(u)\cap L\cap \varphi(V_1')=\emptyset,$$ and furthermore, by~\eqref{eq:P'Q},  $|V_2'|+1< \frac k2+1$. Together with~\eqref{eq:x}, we thus obtain that $$|V_2'\cup \{r_1\}|=|V_2'|+1\leq \deg_L(u).$$

 This yields  the desired contradiction to~\eqref{eq:leaves}, and thus  proves~\eqref{eq:deg_B>k/4}.

\medskip

Now, set
\begin{equation*}
X:=\{v\in L: \deg_{C\cup L}(v)\geq \frac k2\}\supseteq B.
\end{equation*}

We claim that

\begin{equation}\label{eq:X-C_edge}
e(X,C)=0.
\end{equation}

Observe that then
\begin{equation}\label{eq:X=B}
X=B,
\end{equation}

and,
\begin{equation}\label{eq:BC}
e(B,C)=0.
\end{equation}

In order to see~\eqref{eq:X-C_edge}, suppose for contradiction that
there exists an edge $uv$ of~$G$ with $u\in X$ and $v\in C$. We define an embedding $\varphi $ of $V_1'\cup V_2\cup W_1^C\cup \{r_1,r_2\}$ in~$V(G)$, where $W_1^C$ is a certain subset of $W_1$, as follows.

Set $\varphi(r_1):=u$, and set $\varphi(r_2):=v$. Embed a subset $V_1^C$ of $V_1'$ in $N(u)\cap C$, and a subset $V_1^L=V_1'\setminus V_1^C$ in $N(u)\cap L$. We can do so by~\eqref{eq:x}, and since by~\eqref{eq:P'Q}, $|V_1'|<\frac k2$.

Next, map $W_1^C:=N(V_1^C)\cap W_1$ and $V_2$ to $L$,
 preserving all adjacencies. This is possible by~\eqref{eq:x}.
Indeed, observe that by the independence of $S$, each vertex in~$C$ has at least $\frac k2$ neighbours in $L$, while  by~\eqref{eq:RQ}, we have that

$$|V_1^L\cup W_1^C\cup V_2\cup \{u\}|\leq |W_1\cup V_2|+1<\frac k2+1.$$

We have hence mapped $V_1',V_2,W_1^C$ and the vertices $r_1$ and $r_2$ in a way so that the neighbours of $(V_1\setminus V'_1)\cup (W_1\setminus W_1^C)\cup W_2$ are mapped to $L$. This yields the desired contradiction to~\eqref{eq:leaves}. We have thus shown~\eqref{eq:X-C_edge}, and consequently, also~\eqref{eq:X=B} and~\eqref{eq:BC}.

\medskip

Observe that
\begin{equation}\label{eq:D}
D\neq \emptyset.
\end{equation}
 Indeed, otherwise $C\neq \emptyset$ and thus by~\eqref{eq:X-C_edge}, we have that $A\neq \emptyset$. By~\eqref{eq:X=B}, this implies  that $D\neq\emptyset$, contradicting our assumption.

Next, we claim that there is a vertex $w\in N$ with
\begin{equation}\label{eq:config-1}
\deg_{C\cup L}(w)\geq \frac k4.
\end{equation}

Indeed, suppose otherwise.
Using~\eqref{eq:X=B} and~\eqref{eq:D}, we obtain that

$$|A\setminus N|\frac k2+|N|\frac {3k}4\leq e(A,D)< |D|\frac k2.$$

Dividing by $\frac k4$, it follows that
\begin{equation*}\label{eq:2D}
2|A|+|N|< 2|D|.
\end{equation*}

Together with~\eqref{eq:N}, this yields

$$|D|>|A|+|B|\geq \frac n2,$$

a contradiction, since by assumption $|D|\leq |S|\leq \frac n2$.
This proves~\eqref{eq:config-1}.

\medskip

Using a similar argument as for~\eqref{eq:X-C_edge}, we can now show that
\begin{equation}\label{eq:B-C}
|V_1'|\geq \frac k4.
\end{equation}

Indeed, otherwise use~\eqref{eq:config-1} in order to map $r_1$ to $w$, $r_2$ to any $u\in N(w)\cap B$, and embed~$V_1'$ in $C\cup L$,
and~$V_2$ and~$W_1^C$ (defined as above) in~$L$, preserving adjacencies. This yields the desired contradiction to~\eqref{eq:leaves}.

\medskip

Observe that~\eqref{eq:B-C} implies that $\frac k4\leq |V_1'|\leq |W_1|$, and hence, by~\eqref{eq:RQ}, \begin{equation}\label{eq:V2}
|V_2|<\frac k4.
\end{equation}

We claim that moreover

\begin{equation}\label{eq:P'US}
|V_1'\cup W_2|\geq\frac k2.
\end{equation}

Suppose for contradiction that this is not the case. We shall then define an embedding $\varphi$ of $V_1'\cup V_2\cup \{r_1,r_2\}\cup W_2^C$ in $V(G)$, for a certain $W_2^C\subseteq W_2$, as follows.

Set~$\varphi(r_2):=w$, and choose for $\varphi(r_1)$ any vertex $u\in N(w)\cap B$. Map a subset~$V_2^C$ of $V_2$ to $N(w)\cap C$, and map $V_2^L:=V_2\setminus V_2^C$ to $N(w)\cap L$. This is possible, as by~\eqref{eq:x}, by~\eqref{eq:config-1}, and by~\eqref{eq:V2}, we have that $\deg_{C\cup L}(w)\geq |V_2|+1$.

We may assume that we chose $V_2^C$ so that it contains as many vertices from $V_2\setminus V_2'$ as possible. We now plan to map $W_2^C:=N(V_2^C)\cap W_2$ to $V(G)$. Observe that by our choice of $V_2^C$, either $V^C_2\subseteq V_2\setminus V_2'$, and hence $W^C_2$ is empty, or $V_2^L\subseteq V_2'$. In the latter case, it follows that
\begin{equation*}\label{eq:Q_C}
|V_2^L|\leq |W_2\setminus W_2^C|,
\end{equation*}

and by our assumption that $|V_1'\cup W_2|<\frac k2$, we obtain that $$|V_2^L\cup W_2^C\cup \{r_2\}|\leq |W_2|+1<\frac k2+1.$$

 Thus,  by~\eqref{eq:x}, for each $v\in C$, we have that $\deg(v)\geq |V_2^L\cup W_2^C|+1$.
Observe that~\eqref{eq:BC} implies that $u\notin N(C)$. So, we can  map~$W_2^C$ to $L$, preserving all adjacencies.

Next, we shall map $V_1'$ to $N(u)\cap L$. We can do so, since $$|V'_1\cup V_2^L\cup \{ r_2\}|<\frac k2 +1$$ by~\eqref{eq:P'Q}, and since, in the case that $W^C_2=\emptyset$, we use  by our assumption that~\eqref{eq:P'US} does not hold
to see that $$|V_1'\cup V_2^L\cup W_2^C\cup \{r_2\}|\leq |V_1'\cup W_2|+1<\frac k2+1.$$
Hence, by~\eqref{eq:x}, in either case it follows that $$\deg_L(u)\geq |V_1'\cup V_2^L\cup W_2^C\cup \{r_2\}|.$$

We have thus embedded all of $V(T)$ except $(V_1\setminus V_1')\cup (W_2\setminus W_2^C)\cup W_1$ whose neighbours have their image in $L$.
This yields a contradiction to~\eqref{eq:leaves}, and hence proves~\eqref{eq:P'US}.

\medskip

Now, let $x\in \mathbb Q$ be such that $|V_1'|=x\cdot \frac k2$. Then, by~\eqref{eq:P'US},
\begin{equation*}\label{eq:S-big}
|W_2|\geq (1-x)\frac k2.
\end{equation*}

On the other hand,
since $|W_1|\geq |V_1'|=x\frac k2$, and $|V(T)|=k+1$, we have that
\begin{align*}
|(V_1\setminus V_1') \cup V_2\cup W_2|& =|V(T)\setminus (V_1'\cup W_1\cup \{r_1,r_2\})|\\ &<(1-x)k.
\end{align*}

Combining these inequalities, we obtain that
\begin{align}\label{eq:QP}
|V_1\cup V_2| &=|V_1'\cup (V_1\setminus V_1')\cup V_2|\notag \\
&<x\frac k2+(1-x)k-(1-x)\frac k2 \notag \\
& =\frac k2.
\end{align}

The now gained information on the structure of  $T$  enables us to show next that for each vertex $v$ in $\tilde N:=N(B\cup C)\cap L$ it holds that
\begin{equation}\label{eq:no-vertex}
\deg_L(v)< \frac k4.
\end{equation}

Suppose for contradiction that this is not the case, i.\,e., that there exists a~$v\in \tilde N$ with $\deg_L(v)\geq \frac k4$. We define an embedding $\varphi$ of~$V(T)\setminus (W_1\cup W_2)$ in~$V(G)$ so that $N(W_1\cup W_2)$ is mapped to $L$.

Set $\varphi(r_2):=v$ and choose for $\varphi(r_1)$ any vertex $u\in N(v)\cap (B\cup C)$. By~\eqref{eq:V2}, and since we assume that~\eqref{eq:no-vertex} does not hold, we can map $V_2$ to $N(v)\cap L$. Moreover, since by~\eqref{eq:x} and~\eqref{eq:QP} we have that
$$\deg_L(u)\geq |V_1\cup V_2\cup \{r_2\}|,$$
 we can map~$V_1$ to $N(u)\cap L$.
We have hence mapped all of $V(T)$ but $W_1\cup W_2$  to $L$, which yields the desired contradiction to~\eqref{eq:leaves} and thus establishes~\eqref{eq:no-vertex}.

\medskip

We shall finally bring~\eqref{eq:no-vertex} to a contradiction. We use~\eqref{eq:B},~\eqref{eq:X=B},~\eqref{eq:BC} and~\eqref{eq:no-vertex} to obtain that
\begin{align*}
|D|\frac k2 & \geq e(D,L)\\%i don't like it, then we should remind them that D is not empty, and that is sth unneccessary---we really don't need the >, it could well be geq
 & \geq |A\setminus \tilde N|\frac k2+|\tilde N|\frac {3k}{4}-e(C,\tilde N)+|B|k-e(B,\tilde N)\\
 & \geq |A|\frac k2+|\tilde N|\frac k4 + |B|k - e(B\cup C,\tilde N).
\end{align*}

Since $|S|\leq |L|$ by assumption, this inequality implies that
\begin{align*}
|B|\frac k2 +|C|\frac k2+|\tilde N|\frac k4 &\leq |B|\frac k2+(|A|+|B|-|D|)\frac k2+|\tilde N|\frac k4\\  &\leq e(B\cup C,\tilde N)\\ & \leq |\tilde N|\frac k2,
\end{align*}

where the last inequality follows from the fact that $\tilde N\subseteq A=L\setminus X$, by~\eqref{eq:X=B}.

Using~\eqref{eq:no-vertex},
a final double edge-counting now gives

\begin{align*}
(|A|+|B|+|C|)\frac k2 & \leq |A|\frac k2+|\tilde N|\frac k4\\ & \leq e(A,S)\\ & <|D|\frac k2+|C| k\\ & =|S|\frac k2+|C|\frac k2,
\end{align*}
%here it can be <, because S is not empty

implying that
$|L|<|S|$, a contradiction. This completes the proof of Theorem~\ref{thm:LKS-5}.

\section{Caterpillars}

In this section, we shall prove Theorem~\ref{prop:2stars}. We shall actually prove something stronger, namely %derive it from
  Lemmas~\ref{cor:path-2} and~\ref{cor:path-3}.

 A {\em caterpillar} is a tree $T$ where each vertex has distance at most~$1$ to some central path $P\subseteq T$.
In this paper, we shall consider a special subclass of caterpillars, namely those that have at most two vertices of degree greater than~$2$. Observe that any such caterpillar $T$ can be obtained from a path~$P$ by identifying two of its vertices, $v_1$ and $v_2$, with the centres of stars.
We shall write~$T=C(a,b,c,d,e)$, if $P$ has length $a+c+e$, and $v_1$ and $v_2$ are the $(a+1)$th and $(a+c+1)$th vertex on $P$, and have  $b$, resp.~$d$, neighbours outside~$P$. Therefore, if $a,e>0$, then $C(a,b,c,d,e)$ has $b+d+2$ leaves.

We call $P$ the {\em body}, and~$v_1$ and $v_2$ the {\em joints} of the caterpillar. For illustration, see Figure~\ref{im-path1}.

\begin{figure}[h!]\begin{center}\epsfxsize .5\hsize\epsfbox{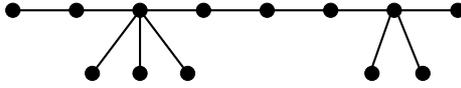}\caption{The caterpillar $C(2,3,4,2,1)$ or $ C(2,3,4,3,0)$.%\in\mathcal T(12,5 or 6,4)$.
}\label{im-path1}\end{center}\end{figure}

So $\mathcal T(k,\ell,c)$, as defined in the introduction, denotes the class of all caterpillars  $C(a,b,c,d,e)$ with $b+d=\ell$, and $a+b+c+d+e=k$.
We can thus state the result of Bazgan,~Li, and Wo\'zniak mentioned in the introduction as follows.

\begin{thm}[Bazgan,~Li,~Wo\'zniak~\cite{blw}]\label{cor:path-1}
Let $k,\ell,c\in\mathbb N$, and let $T=C(a,0,c,d,e)$ be a tree from $\mathcal T(k,\ell,c)$. Let $G$ be a graph so that at least half of the vertices of~$G$ have degree at least $k$. Then $T$ is a subgraph of $ G$.
\end{thm}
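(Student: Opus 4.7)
The plan is to embed $T = C(a,0,c,d,e)$ directly by viewing it as a path $p_0 p_1 \ldots p_{a+c+e}$ with $d$ pendant leaves hanging from the joint $p^* := p_{a+c}$. Following the template of the proof of Theorem~\ref{thm:LKS-5}, I argue by contradiction, pass to an edge-minimal counterexample $G$, and set $L := \{v \in V(G) : \deg(v) \geq k\}$ and $S := V(G) \setminus L$; edge-minimality forces $S$ to be independent.

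The core task is to find a path $Q = q_0 q_1 \ldots q_{a+c+e}$ in $G$ such that $q_{a+c} \in L$ has at least $d$ neighbours outside $V(Q)$. Mapping the body of $T$ onto $Q$ in the obvious way, and the $d$ leaves of $p^*$ to the free neighbours of $q_{a+c}$, yields an embedding of $T$, which contradicts the assumption. Because $q_{a+c}$ has degree at least $k = a+c+d+e$ and loses only $a+c+e$ neighbours to $V(Q) \setminus \{q_{a+c}\}$, the degree condition on the joint comes almost for free once the length-and-position conditions on $Q$ are met.

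To build such a $Q$, I would use a P\'osa-style rotation argument. Starting from a longest path $Q'$ in $G$, the endpoints of $Q'$ and of all its P\'osa-rotations have every neighbour on $Q'$, and since each vertex of $L$ has degree $\geq k$, a standard counting yields $|V(Q')| \geq k + 1$, hence $Q'$ contains a sub-path of length $a + c + e$. To align the joint with $L$, I would apply iterated rotations: $|L| \geq n/2 \geq |S|$ guarantees that a majority of positions along $Q'$ (and its rotated variants) lie in $L$, so one can shift a vertex of $L$ into the desired slot $a + c$. When alignment fails for every rotated version, an edge-count of $e(L, S)$ using the independence of $S$ should produce a contradiction along the lines of the $|L| < |S|$ derivation at the end of the proof of Theorem~\ref{thm:LKS-5}.

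The main obstacle, I expect, is synchronising length and position: a single P\'osa rotation affects only one endpoint, so shifting a specified interior vertex into the joint slot requires a careful iterated rotation scheme together with case analysis. The pure-path regime $d = 0$ sidesteps the joint condition and reduces to producing a path of length $k$ in $G$; the broom regime $a = 0$ (or $e = 0$ by symmetry) simplifies because the joint becomes an endpoint of the body and only one side must be extended. Handling the full generic regime $a, d, e \geq 1$ is where the majority hypothesis $|L| \geq n/2$ must be used in earnest, and where an edge-counting contradiction in the spirit of Section~4 should close the argument.
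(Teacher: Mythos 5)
This is a cited result: the paper does not prove Theorem~\ref{cor:path-1}, it attributes it to Bazgan, Li, and Wo\'zniak~\cite{blw} and uses it as a black box (notably in the proofs of Lemmas~\ref{cor:path-2} and~\ref{cor:path-3}). So there is no in-paper proof to compare against, and the question is whether your sketch would constitute a self-contained proof. As written, it would not.

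Two gaps are load-bearing. First, the claim that ``a standard counting yields $|V(Q')|\geq k+1$'' for a longest path $Q'$ is not standard in this setting: the endpoint of a longest path may lie in $S$, where you have no lower bound on degree. The usual argument (endpoint has all neighbours on the path, hence the path has $\geq \deg+1$ vertices) only applies directly if an endpoint is in $L$. When both endpoints are in $S$ you must rotate and argue that some rotated endpoint lands in $L$, and that step needs to use the independence of $S$ and $|L|\geq n/2$ in a way your sketch does not spell out. Producing a path of length $k$ under the $n/2$-median-degree hypothesis is essentially the pure-path case of~\cite{blw} and is a genuine theorem, not a one-liner. Second, your mechanism for aligning the joint --- ``iterated rotations \dots\ shift a vertex of $L$ into the desired slot $a+c$'' --- does not match what P\'osa rotations do. A rotation relocates an \emph{endpoint}; it does not give you control over which vertex occupies a fixed interior position, and ``a majority of positions along $Q'$ lie in $L$'' does not imply you can steer position $a+c$ into $L$. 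The fallback ``edge-count of $e(L,S)$'' is left entirely unspecified.

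There is, however, a much cleaner route to the alignment step once you have a path of length $k$, and it is exactly the slide argument the paper uses in the proof of Lemma~\ref{cor:path-2}. Since the body of $T=C(a,0,c,d,e)$ has length $a+c+e=k-d$, you have $d$ units of slack on a path $P_k$ of length $k$; sliding the body by $i\in\{0,\dots,d\}$ moves the joint through $d+1$ consecutive vertices of $P_k$, and since $S$ is independent (by edge-minimality) at most every other one of these lies in $S$, so for $d\geq 1$ at least one slide puts the joint in $L$. The degree count you give then hands you $d$ free neighbours for the pendant leaves. This replaces your rotation scheme entirely and reduces the whole problem to the existence of a length-$k$ path, which is the one thing your sketch genuinely leaves unproved.
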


We shall now prove (separately) the two subcases of  Theorem~\ref{prop:2stars}.
Our first lemma deals with the case when $c$ is even.

\begin{lemma}\label{cor:path-2}
Let $k,\ell,c\in\mathbb N$ so that $c$ is even and $\ell\geq c$. Let  $T\in\mathcal T(k,\ell,c)$, and let $G$ be a graph such  that at least half of the vertices of $G$ have degree at least~$k$.
Then $T$ is a subgraph of $G$.
\end{lemma}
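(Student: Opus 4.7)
The plan is to leverage Theorem~\ref{cor:path-1}, which already handles caterpillars with a single star, and to use the parity of $c$ together with the slack $\ell\geq c$ to transfer star-leaves between the two joints of~$T$.

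If $b=0$ or $d=0$, the statement follows immediately from Theorem~\ref{cor:path-1}, so I assume $b,d\geq 1$ (hence $c\geq 2$ and $\ell\geq 2$). I apply Theorem~\ref{cor:path-1} to the single-star caterpillar $T^\star:=C(a,0,c,\ell,e)\in\mathcal T(k,\ell,c)$, which has $a+c+\ell+e=k$ edges, obtaining an embedding $\phi:V(T^\star)\to V(G)$. Let $P=p_0p_1\cdots p_{a+c+e}$ be the image of the body of~$T^\star$, set $u_1:=p_{a+1}$ and $u_2:=p_{a+c+1}$, and let $F$ denote the set of $\ell$ vertices that $\phi$ sends to the leaves of the star at~$u_2$. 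If I can re-attach $b$ of the vertices of~$F$ to~$u_1$ instead of~$u_2$, then $\phi$ becomes an embedding of~$T$.

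The redistribution step is the crux. If $u_1\in L$, then $u_1$ has at least $k$ neighbours in~$G$; at most $k-b$ of these are occupied by~$\phi(V(T^\star))$, so $u_1$ has at least $b$ free neighbours and the reassignment is immediate. When $u_1\notin L$, I would instead find a different embedding of~$T^\star$ whose $v_1$-image lies in~$L$. For this I adopt an edge-minimal counterexample setup (forcing $S=V(G)\setminus L$ to be independent, as in the proof of Theorem~\ref{thm:LKS-5}) and reroute the $u_1u_2$-subpath of~$\phi$: starting at~$u_2$, I construct a length-$c$ walk in~$G$ ending at some $u'_1\in L$, disjoint from~$F$ and from the two tails of~$P$. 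Because $c$ is even such a walk ends in the correct bipartition class of~$P$, and because $\ell\geq c$ the set~$F$ contains enough spare vertices to absorb the $c$ intermediate vertices consumed by the detour.

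The main obstacle will be proving existence of the reroute when $u_1\notin L$, subject to the disjointness and degree constraints needed for the reassignment to remain a valid embedding. I expect this to require a double-counting argument similar to that in the proof of Theorem~\ref{thm:LKS-5}: if no such reroute existed, the $L$-neighbourhoods of $u_1$ and~$u_2$ would concentrate in a small set in a way that contradicts $|L|\geq n/2$ together with the independence of~$S$.
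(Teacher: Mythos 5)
Your reduction to the case $u_1\in L$ is sound: with $u_1\in L$, the bound $\deg(u_1)\geq k$ against the at most $k-b$ occupied vertices one must avoid (the body image minus $u_1$, together with the $d$ retained leaves at $u_2$) does leave at least $b$ free neighbours, so the reassignment goes through. The problem is that the complementary case $u_1\notin L$, which you explicitly defer, is the heart of the lemma, and the ``reroute'' you sketch there does not work as stated. Replacing the middle segment $u_1Pu_2$ of the body by a $c$-step walk from $u_2$ to a new vertex $u_1'\in L$ disconnects the initial tail of the body from the rest, since nothing attaches $u_1'$ to the vertex preceding $u_1$ on that tail. Beyond that structural issue, you concede that the existence of such a reroute ``will require a double-counting argument'' that you do not supply. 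The gap is genuine and sits exactly where the parity of $c$ and the slack $\ell\geq c$ must do their work.

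The paper avoids all of this by not starting from $T^\star$ at all. It takes from Theorem~\ref{cor:path-1} a bare path $P_k$ of length $k$, embeds the body $P$ of $T$ (of length $k-\ell$) into an initial segment of $P_k$, and then \emph{slides} that segment along $P_k$, one step at a time away from its starting endpoint, until both joint images land in $L$. Independence of $S$ (obtained via an edge-minimal counterexample, exactly as you propose) means a single shift fixes the case of both joints in $S$; when exactly one joint lands in $S$, the evenness of $c$ forces two consecutive $L$-vertices on the segment between the two joint images, and sliding so that $v_1$ reaches the first of them (plus at most one further step) puts both joints into $L$. At most $c$ shifts are ever needed, and $\ell\geq c$ is precisely the slack ensuring the shifted body still fits in $P_k$; the final extension to the two stars is then the same elementary degree count you already carried out for the $u_1\in L$ case. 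If you want to salvage your argument, the clean repair is to drop the reroute and slide the whole body image along a length-$k$ path, which is in effect the paper's proof.
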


\begin{proof}
Observe that we may assume that $\ell \geq 2$.  Let $v_1$ and $v_2$ be the joints of~$T$, and let $P$ be its body.
As above, denote by $L$ the set of those vertices of~$G$ that have degree at least $k$ and set $S:=V(G)\setminus L$.  We may assume that~$S$ is independent.

By Theorem~\ref{cor:path-1}, there is a path $P_k$ of length $k$ in $G$. Let $\varphi$ be an embedding of $V(P)$ in $V(P_k)$ which maps the starting vertex of $P$ to the starting vertex $u$ of $P_k$. %Observe that the image of $P$ is a subpath of $P_k$ of length~$k-\ell$.
 Now, if both $u_1:=\varphi(v_1)$ and $u_2:=\varphi(v_2)$ are in $L$, then we can easily extend~$\varphi$ to~$V(T)$.

On the other hand, if both $u_1$ and $u_2$ lie in $S$, then we can
`shift' the image of~$V(P)$ along~$P_k$ by $1$ away from $u$, i.\,e., we map each vertex~$v\in \varphi(V(P))$ to its neighbour on~$vP_k$. Then, the composition of $\varphi$ and the shift maps both $v_1$ and $v_2$ to $L$,
%both  both~$v_1$ and $v_2$ are embedded in $L$ (by $\varphi$ composed with the shifting),
and can thus be extended to an embedding of $V(T)$.

To conclude, assume that one of the two vertices $u_1$ and $u_2$ lies in $L$ and the
other lies in $S$. As $c$ is  even and $S$ is independent, it
follows that there are two consecutive vertices~$w_1$ and $w_2$ (in this order) on
$u_1P_ku_2$ which lie in $L$.

Similarly as above, shift the image of $V(P)$ away from $u$. In fact, we repeat this shift until  $u_1$ is shifted to $w_1$. If  the composition of $\varphi$ with the iterated shift maps $v_2$ to $L$, we are
done.
Otherwise, we shift the image of $V(P)$
once more. Then both
$v_1$  and $v_2$ are mapped to~$L$, and we are done.

Observe that in total, we have shifted the image of~$V(P)$ at most $c$ times. We could do so, since $|P_k|-|P|=\ell\geq c$ by assumption.
\end{proof}

We now allow $c$ to be odd, restricting the choice of~$k$.
\begin{lemma}\label{cor:path-3}
Let $k,\ell,c, n\in\mathbb N$ be such that $\ell\geq c$.
 Let $T=C(a,b,c,d,e)$ be a tree in  $\mathcal T(k,\ell,c)$, and let~$G$ be a graph of order $n$ such that at least $n/2$  vertices of~$G$ have
degree at least $k$. Suppose that
\begin{enumerate}[(i)]
\item $k\geq \lfloor n/2\rfloor +2\min\{a,e\}$, if $\max\{a,e\}\leq k/2$, and
\item $k\geq \lfloor n/4\rfloor +a+e+1$, if $\max\{a,e\}>k/2$.
\end{enumerate}
Then $T$ is a subgraph of $ G$.
\end{lemma}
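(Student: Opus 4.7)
The strategy follows Lemma~\ref{cor:path-2} and augments it to handle the parity obstruction that arises when $c$ is odd. Set $L := \{v \in V(G) : \deg(v) \geq k\}$ and $S := V(G)\setminus L$, and, exactly as in that proof, reduce to the case in which $S$ is independent. By Theorem~\ref{cor:path-1} there is a path $P_k = p_0 p_1 \ldots p_k$ of length~$k$ in $G$; I will take $P_k$ to be a longest path so that all neighbours of the endpoints $p_0, p_k$ lie on $P_k$. For each shift $s \in \{0,\ldots,\ell\}$ consider the embedding of the body $P$ of $T$ onto the subpath $p_s p_{s+1} \ldots p_{a+c+e+s}$, which places the joints $v_1, v_2$ at $p_{a+s}$ and $p_{a+c+s}$. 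As in Lemma~\ref{cor:path-2}, if some~$s$ puts both joint-images in~$L$, or both in~$S$ (in which case one further shift together with the independence of~$S$ finishes the job), we extend the embedding to~$T$ and are done.

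The only remaining case is that, for every $s\in\{0,\ldots,\ell\}$, exactly one of $\{p_{a+s},\,p_{a+c+s}\}$ lies in $L$. Since $c$ is odd and $S$ is independent, this rigidity forces a strict $L/S$-alternation along the entire segment $p_a p_{a+1} \ldots p_{a+c+\ell}$ of $P_k$ (both `phases' of the alternation are pinned down by the assumption on~$s$ and~$s+1$, using that no two consecutive vertices of $P_k$ can both lie in $S$). My plan for this situation is to rotate $P_k$ along a suitable chord, producing a new length-$k$ path $P_k'$ on the same vertex set whose $L/S$-pattern at the joint positions is no longer rigidly alternating, and then rerun the shifting argument on $P_k'$. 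Concretely, a neighbour $p_j\in L$ of $p_0$ (or of another $L$-endpoint accessed after an initial rotation) yields $P_k' = p_{j-1}p_{j-2}\ldots p_0 p_j p_{j+1}\ldots p_k$, a length-$k$ path along which the indices up to position $j-1$ have their parity reversed; choosing $j$ of appropriate parity breaks the alternation at the relevant joint positions.

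What remains is to show that a rotation with the correct parity always exists. Each $L$-vertex on $P_k$ has degree at least $k$, but only two of its $P_k$-neighbours are adjacent along $P_k$, so it has many ``chord'' neighbours available; a double-count of edges between $L$-vertices and the vertex set $V(G)$ produces the desired rotation, and this is where hypotheses (i) and~(ii) come in. In case~(i) the bound $k-2\min\{a,e\}\ge\lfloor n/2\rfloor$ ensures that the alternating segment is long enough that, since $|L|\ge\lceil n/2\rceil$, a chord with suitable parity is forced. Case~(ii), where $\max\{a,e\}>k/2$ (very unbalanced tree), has only a short shifting window, and the chord argument is applied instead inside the long tail of the embedded body; the bound $k\ge\lfloor n/4\rfloor+a+e+1$ is exactly what is needed to push the counting through in that regime. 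The main obstacle in the whole argument is exactly this chord-counting step: one must verify that the required rotation exists under each of the two hypotheses without losing the additive constants in $\lfloor n/2\rfloor$ or $\lfloor n/4\rfloor$, and the separation into cases (i) and~(ii) reflects the two regimes in which the counting is tight.
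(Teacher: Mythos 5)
Your overall strategy --- obtain a long path via Theorem~\ref{cor:path-1}, shift the body along it, and when the joint positions are rigidly stuck use a chord to rotate the path --- is the same as the paper's. The chord existence in your plan and in the paper's proof is exactly what cases~(i) and~(ii) are for. So the skeleton matches. But there are two substantive gaps in your plan.

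First, your argument has no termination criterion. You work with a single path $P_k$, deduce the rigid $L/S$-alternation, rotate to obtain $P_k'$, and propose to ``rerun the shifting argument on $P_k'$.'' Nothing you say prevents $P_k'$ from again being rigidly alternating at the joint positions, so you would be iterating indefinitely with no monovariant. The paper avoids this entirely by first upgrading the conclusion of the shifting argument to a \emph{global} statement: every path of length at least $k-1$ must zigzag between $L$ and $S$ on its middle segment (more precisely, on $x_{\min\{a,e\}}Qx_{m-\max\{a,e\}}$ and $x_{\max\{a,e\}}Qx_{m-\min\{a,e\}}$, using that $c$ is odd to drop the length from $k$ to $k-1$). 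Once this is a global constraint on all paths, a \emph{single} rotation producing an $L$--$L$ edge in the protected range is an immediate contradiction; no iteration is needed.

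Second, the chord-counting step --- which you correctly flag as ``the main obstacle'' --- is left entirely unspecified, and the mechanism you gesture at (choose a chord of the right \emph{parity} so that the reversal of indices breaks the alternation at the joint positions) is both more delicate and not obviously achievable from the hypotheses. The paper's device is different and cleaner: among all paths $Q=x_0\ldots x_m$ of length at least $k-1$ ending in $L$, pick one with a \emph{maximum number of $L$-vertices}. This maximality forces $N(x_m)\subseteq S\cup V(Q)$, because a neighbour in $L\setminus V(Q)$ would let you extend $Q$ and gain an $L$-vertex. Then $\deg(x_m)\geq k > |S| + 2\min\{a,e\}$ (from (i), and also from (ii) via the inequality noted after the lemma statement) yields a neighbour $x_s\in L\cap V(Q)$ with $s\in[\min\{a,e\},\,m-\min\{a,e\}-1]$; in the unbalanced case (ii), a sharper count places $s$ outside the ``middle dead zone'' of length $2\max\{a,e\}-m$ as well. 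Rotating at $x_s$ produces a path of length $\geq k-1$ containing the $L$--$L$ edge $x_sx_m$ in the protected range, contradicting the zigzag. Your ``longest path so that endpoint neighbours lie on it'' is weaker than this maximality of $L$-vertices --- it does not keep the endpoint's neighbourhood away from $L\setminus V(P_k)$ --- and you never carry out the count, so the proposed proof as written has a genuine hole precisely at its crux.
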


Observe that in case~(ii) of Lemma~\ref{cor:path-3} it follows that
$$k> \lfloor n/4\rfloor + \min\{a,e\} + \max\{a,e\}> \lfloor n/4\rfloor + \min\{a,e\} + k/2,$$
and hence, as in (i),
$$k\geq \lfloor n/2\rfloor +2\min\{a,e\}.$$

\begin{proof}[Proof of Lemma~\ref{cor:path-3}] As before, set $L:=\{ v\in V(G): \deg (v)\geq k\}$ and set $S:=V(G)\setminus L$.  We may assume that $S$ is independent, and that that $a,e\neq 0$. Because of Theorem~\ref{cor:path-1}, we may moreover assume that~$b,d>0$ (and thus~$\ell\geq 2$), and by Lemma~\ref{cor:path-2}, that $c$ is odd. Set $\minae:=\min\{a,e\}$ and set $\minAE:=\max\{a,e\}$.

Suppose that $T\not\subseteq G$. Using the same shifting arguments as
in the proof of Lemma~\ref{cor:path-2}, we may assume that every path in $G$
 of length at least~$k$ zigzags between~$L$ and~$S$, except possibly on its first $a$ and its last~$e$ edges. %vertices.
 In fact, as $c$ is odd, we can even assume that every path in $G$ of length at least~$k-1$ zigzags between~$L$ and~$S$, except possibly on its first $a$ and its last $e$ edges.%vertices. % if we have exactly one L-L edge between a+1th vertex and a+c+1th vertex, then they lie in the same partition. hence,we may assume that there are at least 2 L-L edges, and hence we need to shift by at most c-1\leq ell-1

 As paths are symmetric,  we may actually assume that every path $Q=x_0\ldots x_m$ in $G$ of any length~$m\geq k-1$  zigzags on its subpaths $x_{\minae} Qx_{m-\minAE}$ and $x_{\minAE} Qx_{m-\minae}$. Observe that these subpaths overlap exactly if $\minAE\leq m/2$. Our aim is now to find a path that does not zigzag on the specified subpaths, which will yield a contradiction.

So, let $\mathcal Q$ be the set of those subpaths of $G$ that have length at least $k-1$ and end in $L$. Observe that by Theorem~\ref{cor:path-1}, and since $S$ is independent, $\mathcal Q\neq\emptyset$. Among all paths in $\mathcal Q$, choose $Q=x_0\dots x_{m}$ so that it has a  maximal number of vertices in $L$.

This choice of $Q$ guarantees that $N(x_m)\subseteq S\cup V(Q)$.
Observe that the remark after the statement of Lemma~\ref{cor:path-3} implies that in both cases~(i) and (ii) ,

\begin{align*}
\deg(x_m)\geq k&> \lfloor n/2\rfloor +2\minae\\
 &\geq |S|+2\minae.
\end{align*}

  Since $\minae >0$, we thus obtain that~$x_m$ has a neighbour~$x_s\in L\cap V(Q)$ with $$s\in [ \minae , m-\minae -1].$$

Moreover, in the case that  $\minAE> m/2 $, condition~(ii) of Lemma~\ref{cor:path-3} implies that
\begin{align*}
\deg (x_m)\geq k& \geq 2 (\lfloor n/4\rfloor +a+e+1) -k\\
&\geq\lfloor n/2 \rfloor -1 +2\minae+2\minAE +2 -(m+1)\\
&=|S| +2\minae+2\minAE-m.
\end{align*}
Hence, in this case we can guarantee that
\[
s\in[\minae, m-\minAE -1]\cup [\minAE,m-\minae -1].
\]

Now, consider the path $Q^\ast$ we obtain from $Q$ by joining the subpaths $x_1Qx_s$ and $x_{s+1}Qx_m$ with the edge $x_sx_m$. Then $Q^\ast$ is a path of length $m\geq k-1$ which contains the $L-L$ edge $x_sx_m$. Note that $x_sx_m$ is neither one of the first~$\minae$ nor of the last~$\minae$ edges on $Q^\ast$. Furthermore, in the case that  $\minAE>m/2$, we know that $x_sx_m$ is none of the middle $2\minAE-m$ edges on $Q^*$.  This contradicts our assumption that every path of length at least $k-1$ zigzags between $L$ and $S$ except possibly on these subpaths.
\end{proof}

It remains to prove  Theorem~\ref{prop:2stars}.

\begin{proof}[Proof of Theorem~\ref{prop:2stars}]
Assume we are given graphs $G$ and $T\in\mathcal T(k,\ell,c)$ as in Theorem~\ref{prop:2stars}. If $c$ is even,  it follows from Lemma~\ref{cor:path-2} that $T\subseteq G$. So assume that $\ell + c\geq \lfloor n/2\rfloor $. We shall now use Lemma~\ref{cor:path-3} to see that $T\subseteq G$. Suppose that $T=C(a,b,c,d,e)$.
For $\max\{ a,e\}\leq k/2$, it suffices to observe that $$k-2\min\{a,e\}\geq k-a-e= \ell+c\geq \lfloor n/2\rfloor.$$ On the other hand, for $\max\{ a,e\}> k/2$, observe that $$k-a-e=\ell +c\geq \lfloor n/2\rfloor \geq \lfloor n/4\rfloor+1,$$ since we may assume that $n\geq 4$, as otherwise Theorem~\ref{prop:2stars} holds trivially.
\end{proof}

\bibliographystyle{plain}
\bibliography{graphs}

\end{document}